\definecolor{linkcolor}{RGB}{83,83,182}
\definecolor{citecolor}{RGB}{128,0,128}
\newcommand{\op}{\mathrm{op}}
\newcommand{\RR}{\mathbb{R}}
\newcommand{\NN}{\mathbb{N}}
\newcommand{\ID}{\mathrm{ID}}
\newcommand{\AD}{\mathrm{AD}}
\newcommand{\OS}{\mathrm{OS}}
\newtheorem{lemma}{Lemma}
\newtheorem{corollary}{Corollary}
\newtheorem{assumption}{Assumption}
\newtheorem{remark}{Remark}
\newtheorem{example}{Example}
\newenvironment{proof}[1][]{\noindent {\bf Proof #1:\;}}{\hfill $\Box$}
\title{One-step differentiation of iterative algorithms}
\author{%
  Jérôme Bolte\footnote{%
  Toulouse School of Economics,
  University of Toulouse Capitole.
  Toulouse, France.}
\and
  Edouard Pauwels\footnote{%
  IRIT, CNRS, Université de Toulouse.
  Institut Universitaire de France (IUF).
  Toulouse, France.}
\and
  Samuel Vaiter\footnote{%
  CNRS \& Université Côte d'Azur,
  Laboratoire J. A. Dieudonné.
  Nice, France.}
}
\begin{document}

\maketitle

\begin{abstract}
    In appropriate frameworks, automatic differentiation is transparent to the user at the cost of being a significant computational burden when the number of operations is large.
    For iterative algorithms, implicit differentiation alleviates this issue but requires custom implementation of Jacobian evaluation.
    In this paper, we study one-step differentiation, also known as Jacobian-free backpropagation, a method as easy as automatic differentiation and as performant as implicit differentiation for fast algorithms (e.g., superlinear optimization methods).
    We provide a complete theoretical approximation analysis with specific examples (Newton's method, gradient descent) along with its consequences in bilevel optimization. Several numerical examples illustrate the well-foundness of the one-step estimator.
\end{abstract}

\section{Introduction}

Differentiating the solution of a machine learning problem is a important task, e.g., in hyperparameters optimization~\cite{bengio2000gradient}, in neural architecture search~\cite{liu2019darts} and when using convex layers~\cite{agrawal2019differentiable}.
There are two main ways to achieve this goal: \emph{automatic differentiation} (AD) and \emph{implicit differentiation} (ID).
Automatic differentiation implements the idea of evaluating derivatives through the compositional rules of differential calculus in a user-transparent way. It is a mature concept~\cite{griewank2008evaluating} implemented in  several machine learning frameworks \cite{pytorch2019,jax2018github,tensorflow2015whitepaper}. However, the time and memory complexity incurred may become prohibitive as soon as the computational graph becomes bigger, a typical example being unrolling iterative optimization algorithms such as gradient descent \cite{bai2019deep}.
The alternative, implicit differentiation, is not always accessible: it does not solely relies on the compositional rules of differential calculus and usually requires solving a linear system. The user needs to implement custom rules in an automatic differentiation framework (as done, for example, in \cite{amos2017optnet}) or use dedicated libraries such as~\cite{jaxopt,agrawal2019differentiable,bertrand2020implicit} implementing these rules for given models.
Provided that the implementation is carefully done, this is most of the time the gold standard for the task of differentiating problem solutions.

\paragraph{Contributions.}
We study a \emph{one-step Jacobian} approximator based on a simple principle: differentiate only the last iteration of an iterative process and drop previous derivatives. The idea of dropping derivatives or single-step differentiation was explored, for example, in \cite{geng2021attention,ramzi2022shine,WuFung2022JFB,sahoo2023backpropagation} and our main contribution is a general account and approximation analysis for Jacobians of iterative algorithms.
One-step estimation constitutes a rough approximation at first sight and our motivation to study is its ease-of-use within automatic differentiation frameworks: no custom Jacobian-vector products or vector-Jacobian products needed as long as a \texttt{stop\_gradient} primitive is available (see \Cref{tab:presentationThreeAlgo}).

We conduct a thorough approximation analysis of one-step Jacobian (\Cref{cor:approxOS}). The distance with the true Jacobian are produced by the distance to the solution (i.e., the quality of completion of the optimization phase) and the lack of contractivity of the iteration mapping. These imprecisions have  to be balanced with the ease of implementation of the one-step Jacobian. This suggests that one-step differentiation is efficient for small contraction factors, which corresponds to fast algorithms. We indeed show that one-step Jacobian is asymptotically correct for super-linearly convergent algorithms (\Cref{cor:superlinearCvgt}) and provide similar approximation rate as implicit differentiation for \emph{quadratically convergent algorithms} (\Cref{cor:quadCvgt}).

We exemplify these results with hypergradients in bilevel optimization, conduct a detailed complexity analysis and highlight in \Cref{cor:bilevel} the estimation of approximate critical points.
Finally, numerical illustrations are provided to show the practicality of the method on logistic regression using Newton's algorithm, interior point solver for quadratic programming and weighted ridge regression using gradient descent.

\paragraph{Related works.}
Automatic differentiation~\cite{griewank2008evaluating} was first proposed in the forward mode in~\cite{wengert1964simpleautomatic} and its reverse mode in~\cite{linnainmaa1970representationcumulative}.
The study of the behaviour of differentiating iterative procedure by automatic differentiation was first analyzed in~\cite{gilbert1992automatic} and~\cite{beck1994automatic} in the optimization community.
It was studied in the machine learning community for smooth methods~\cite{mehmood2020automatic,lorraine2020optimizing,pauwels2022derivatives}, and nonsmooth methods~\cite{bolte2022automatic}.
Implicit differentiation is a recent highlight in machine learning.
It was shown to be a good way to estimate the Jacobian of problem solutions, for deep equilibrium network~\cite{bai2019deep}, optimal transport~\cite{luise2018differential}, and also for nonsmooth problems~\cite{bolte2021nonsmooth}, such as sparse models~\cite{bertrand2020implicit}.
Truncated estimation of the "Neumann series" for the implicit differentiation is routinely used~\cite{lorraine2020optimizing,luketina2016scalable}.
In the very specific case of min-min problems, \cite{ablin2020super} studied the speed of convergence of automatic, implicit, and analytical differentiation.

The closest work to ours is~\cite{WuFung2022JFB} -- under the name \emph{Jacobian-free backpropagation} -- but differs significantly in the following ways.
Their focus is on single implicit layer networks, and guarantees are qualitative (descent direction of~\cite[Theorem 3.1]{WuFung2022JFB}). In contrast, we provide quantiative results on abstract fixed points and  applicable to any architecture.
The idea of ``dropping derivatives'' was proposed in ~\cite{finn2017maml} for meta-learning, one-step differentiation was also investigated to train Transformer architectures \cite{geng2021attention}, and to solve bilevel problems with quasi-Newtons methods~\cite{ramzi2022shine}.

\begin{table}[t]
    \centering
    \begin{minipage}[t]{.3\textwidth}
\begin{algorithm}[H]
  \caption{Automatic}
	\label{alg:autodiff}
	\textbf{Input:} \\$\theta \mapsto x_0(\theta) \in \mathcal{X}$, $k > 0$.\\
    \textbf{Eval:}\\
   \begin{algorithmic}
   \STATE \texttt{with\_gradient}
    \FOR{$i= 1, \ldots, k$}
        \STATE $x_{i}(\theta) = F(x_{i-1}(\theta), \theta)$
    \ENDFOR
    \STATE \textbf{Return:} $x_k(\theta)$
  \end{algorithmic}
  \textbf{Differentiation:} native autodiff on \textbf{Eval}.
  
\end{algorithm}
\end{minipage}
\quad
\begin{minipage}[t]{.3\textwidth}
\begin{algorithm}[H]
  \caption{Implicit}
	\label{alg:implicitDiff}
	\textbf{Input:}\\ $x_0 \in \mathcal{X}$, $k > 0$.\\
    \textbf{Eval: }\\
   \begin{algorithmic}
    \STATE \texttt{stop\_gradient}
    \FOR{$i= 1, \ldots, k$}
        \STATE $x_{i} = F(x_{i-1}, \theta)$
    \ENDFOR
    \STATE \textbf{Return:} $x_k$
  \end{algorithmic}
  \textbf{Differentiation:} 
  Custom implicit VJP / JVP.
\end{algorithm}
\end{minipage}
\quad
\begin{minipage}[t]{.3\textwidth}
\begin{algorithm}[H]
  \caption{One-step}
	\label{alg:oneStep}
	\textbf{Input:} $x_0 \in \mathcal{X}$, $k > 0$.\\
    \textbf{Eval: }\\
    
   \begin{algorithmic}
    \STATE \texttt{stop\_gradient}
    \FOR{$i= 1, \ldots, k-1$}
        \STATE $x_{i} = F(x_{i-1}, \theta)$
    \ENDFOR
    \STATE \texttt{with\_gradient}
    \STATE $x_k(\theta) = F(x_{k-1},\theta)$
    \STATE \textbf{Return:} $x_k(\theta)$
  \end{algorithmic}
  \textbf{Differentiation:} native autodiff on \textbf{Eval}
\end{algorithm}
\end{minipage}
\begin{minipage}[h]{.49\textwidth}
  \caption{Qualitative comparison of differentiation strategies. Native autodiff refers to widespread primitives in differentiable programming (e.g. \texttt{grad} in JAX). Custom JVP/VJP refers to specialized libraries such as \texttt{jaxopt}~\cite{blondel2022efficient}  or \texttt{qpth}~\cite{amos2017optnet} implicit differentiation in specific contexts.}
    \label{tab:presentationThreeAlgo}
\end{minipage}\quad
\begin{minipage}[h]{.48
\textwidth}
\begin{tabular}{lcc}
        \toprule
        Algorithm& Implementation  & Efficient \\ \midrule
         \textbf{Automatic} & Native autodiff  & no \\
         \textbf{Implicit} & Custom& yes \\
         \textbf{One step} & Native autodiff  & yes \\
         \bottomrule
    \end{tabular}
\end{minipage}
\vskip -.2in
\end{table}

\section{One-step differentiation}

\subsection{Automatic, implicit and one-step differentiation}
\label{sec:threeAlgorithms}
Throughout the text $F \colon \RR^n \times \RR^m \to \RR^n$ denotes a recursive algorithmic map from $\RR^n$ to $\RR^n$ with $m$ parameters. For any $\theta \in \RR^m$, we write $F_\theta : x \mapsto F(x,\theta)$ and let $F_\theta^k$ denote $k$ recursive composition of $F_\theta$, for $k \in \NN$.
The map $F_\theta$ defines a recursive algorithm as follows
\begin{equation}\label{eq:algo}
    x_0(\theta) \in \RR^n \quad \text{and} \quad x_{k+1}(\theta) = F(x_k(\theta), \theta) ,
\end{equation}
We denote by $J_x F_\theta$ the Jacobian matrix with respect to the variable $x$.
The following assumption is sufficient to ensure a non degenerate asymptotic behavior.
\begin{assumption}[Contraction]{\rm 
    \label{ass:mainAssumption}
    Let $F \colon \RR^{n} \times \RR^m \to \RR^n$ be $C^1$, $0 \leq \rho<1$, and $\mathcal{X} \subset \RR^n$ be nonempty convex closed, such that for any $\theta \in \RR^m$, $F_\theta(\mathcal{X}) \subset \mathcal{X}$ and $\|J_x F_\theta\|_{\op} \leq \rho$.}
\end{assumption}
\begin{example}
    \label{ex:contractionsOpti}{\rm 
    The main algorithms considered in this paper fall in the scope of smooth optimization. The algorithmic map $F_\theta$ is associated to a smooth parameteric optimization problem given by $f \colon \RR^{n} \times \RR^m \to \RR$ such that $f_\theta \colon x \mapsto f(x,\theta)$ is strongly convex, uniformly in $\theta$. Two examples of algorithmic maps are given by gradient descent, $F_\theta(x) = x - \alpha \nabla f_\theta(x)$, or Newton's $F_\theta(x) = x - \alpha \nabla^2 f_\theta(x)^{-1} \nabla f_\theta(x)$ for positive step $\alpha>0$. For small step sizes, gradient descent provides a contraction and Newton's method provides a local contraction, both fall in the scope of \Cref{ass:mainAssumption}.}
\end{example}

The following lemma gathers known properties regarding the fixed point of $F_\theta$, denoted by $\bar{x}(\theta)$ and for which we will be interested in estimating derivatives. 
\begin{lemma}
    Under \Cref{ass:mainAssumption}, for each $\theta$ in $\RR^m$ there is a unique fixed point of $F_\theta$ in $\mathcal{X}$ denoted by $\bar{x}(\theta)$, which is a $C^1$ function of $\theta$. Furthermore, for all $k \in \NN$, we have $\|x_k(\theta) - \bar{x}(\theta)\| \leq \rho^k \|x_0(\theta) - \bar{x}(\theta)\| \leq  \rho^k \frac{\|x_0 - F_\theta(x_0)\|}{1 - \rho}$.
    \label{lem:fixedPoint}
\end{lemma}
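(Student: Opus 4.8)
The plan is to establish the three assertions in turn, each flowing from the contraction estimate in \Cref{ass:mainAssumption}. First I would upgrade the pointwise Jacobian bound $\|J_x F_\theta\|_{\op} \leq \rho$ to a genuine Lipschitz (contraction) bound on $F_\theta$ over $\mathcal{X}$: for $x,y \in \mathcal{X}$, convexity of $\mathcal{X}$ keeps the segment $[x,y]$ inside $\mathcal{X}$, so the fundamental theorem of calculus gives
\[
F_\theta(x) - F_\theta(y) = \int_0^1 J_x F_\theta\big(y + t(x-y)\big)\,(x-y)\,\dd t ,
\]
and taking norms under the integral yields $\|F_\theta(x) - F_\theta(y)\| \leq \rho \|x - y\|$. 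Since $\mathcal{X}$ is nonempty and closed (hence complete as a subset of $\RR^n$) and $F_\theta$ maps $\mathcal{X}$ into itself as a $\rho$-contraction with $\rho < 1$, the Banach fixed point theorem delivers a unique fixed point $\bar{x}(\theta) \in \mathcal{X}$.

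For the $C^1$ dependence on $\theta$, I would apply the implicit function theorem to the map $G(x,\theta) := x - F(x,\theta)$, which is $C^1$ by assumption and satisfies $G(\bar{x}(\theta),\theta) = 0$. The partial Jacobian $J_x G = I - J_x F_\theta$ is invertible because $\|J_x F_\theta\|_{\op} \leq \rho < 1$ makes the Neumann series for $(I - J_x F_\theta)^{-1}$ converge. Hence near each $\theta$ the fixed point is a $C^1$ function of $\theta$; uniqueness of the fixed point forces these local solutions to coincide with $\bar{x}$, so $\bar{x}$ is globally $C^1$ on $\RR^m$.

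For the convergence rate, I would use $\bar{x}(\theta) = F_\theta(\bar{x}(\theta))$ together with the contraction property: $\|x_{k} - \bar{x}\| = \|F_\theta(x_{k-1}) - F_\theta(\bar{x})\| \leq \rho\|x_{k-1} - \bar{x}\|$, and a one-line induction gives $\|x_k(\theta) - \bar{x}(\theta)\| \leq \rho^k \|x_0(\theta) - \bar{x}(\theta)\|$. The second inequality follows from the triangle inequality and one more application of the contraction, $\|x_0 - \bar{x}\| \leq \|x_0 - F_\theta(x_0)\| + \|F_\theta(x_0) - F_\theta(\bar{x})\| \leq \|x_0 - F_\theta(x_0)\| + \rho\|x_0 - \bar{x}\|$, which rearranges into $\|x_0 - \bar{x}\| \leq \|x_0 - F_\theta(x_0)\|/(1-\rho)$.

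Since everything reduces to the single contraction inequality, no step is genuinely difficult. The only points requiring care are the passage from the \emph{pointwise} Jacobian bound to the \emph{global} contraction estimate, where convexity of $\mathcal{X}$ is essential, and the gluing of the local implicit-function descriptions into one globally $C^1$ map, which is exactly where uniqueness of the fixed point is invoked.
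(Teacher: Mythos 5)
Your proof is correct and follows essentially the same route as the paper's (very brief) sketch: convexity of $\mathcal{X}$ plus a mean-value/fundamental-theorem argument to turn the Jacobian bound into a $\rho$-contraction, the Banach fixed point theorem for existence and uniqueness, the implicit function theorem (with invertibility of $I - J_xF_\theta$) for $C^1$ dependence, and the standard geometric-rate estimate. You have simply filled in the details the paper leaves to the reader.
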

This is well known, we briefly sketch the proof. The mapping $F_\theta$ is a $\rho$ contraction on $\mathcal{X}$ -- use the convexity of $\mathcal{X}$ and the intermediate value theorem. Banach fixed point theorem ensures existence and uniquenness, differentiability is due to the implicit function theorem. The convergence rate is classical. We are interested in the numerical evaluation of the Jacobian $J_\theta \bar x(\theta)$, thus well-defined under \Cref{ass:mainAssumption}.

The automatic differentiation estimator $J^{\AD}x_{k}(\theta) = J_{\theta} x_{k}(\theta)$ propagates the derivatives (either in a forward or reverse mode) through iterations based on the recursion, for $i = 1, \ldots, k-1$,
\begin{equation}\label{eq:prodjac}
    J_\theta x_{i+1}(\theta) = J_x F(x_i(\theta), \theta) J_\theta x_i(\theta) + J_\theta F(x_i(\theta), \theta) .
\end{equation}
Under assumption \ref{ass:mainAssumption} we have $J^{\AD}x_{k}(\theta) \to J_{\theta} \bar{x}(\theta)$ and the convergence is asymptotically linear \cite{gilbert1992automatic,beck1994automatic,mehmood2020automatic,scieur2022curse,bolte2022automatic}. $J^{\AD}x_{k}(\theta)$ is available in differentiable programming framework implementing common primitives such as backpropagation.

The implicit differentiation estimator $J^{\ID}x_{k}(\theta)$ is given by application of the implicit function theorem using $x_k$ as a surrogate for the fixed point $\bar{x}$,
\begin{equation}\label{eq:diffID}
    J^{\ID} x_{k}(\theta) = (I - J_x F(x_k(\theta), \theta))^{-1} J_\theta F(x_k(\theta), \theta) .
\end{equation}
By continuity of the derivatives of $F$, we also have $J^{\ID} x_{k}(\theta) \to J_{\theta} \bar{x}(\theta)$ as $k \to \infty$, (see \textit{e.g.} \cite[Lemma 15.1]{griewank2008evaluating} or \cite[Theorem 1]{blondel2022efficient}). Implementing $J^{\ID}x_{k}(\theta)$ requires either manual implementation or dedicated techniques or libraries \cite{amos2017optnet,agrawal2019differentiable,kolter2020tutorial,gu2020implicit,el2021implicit,blondel2022efficient} as the matrix inversion operation is not directly expressed using common differentiable programming primitives.

The one-step estimator $J^{\OS}x_{k}(\theta)$ is the Jacobian of the fixed point map for the last iteration 
\begin{equation}\label{eq:diffOS}
    J^{\OS}x_{k}(\theta) = J_\theta F(x_{k-1}(\theta), \theta) .
\end{equation}
Contrary to automatic differentiation or implicit differentiation estimates, we do not have $J^{\OS}x_{k}(\theta) \to J_{\theta} \bar{x}(\theta)$ in general as $k \to \infty$, but we will see that  the error is essentially proportional to $\rho$, and thus negligible for fast algorithms for which the estimate is accurate.

From a practical viewpoint, the three estimators $J^\AD$, $J^\ID$ and $J^\OS$ are implemented in a differentiable programming framework, such as \texttt{jax}, thanks to a primitive \texttt{stop\_gradient}, as illustrated by Algorithms 1, 2 and 3.
The computational effect of the \texttt{stop\_gradient} primitive is to replace the actual Jacobian $J_x F(x_i(\theta), \theta)$ by zero for chosen iterations $i \in \{1,\ldots,k\}$.
Using it for all iterations except the last one, allows one to implement $J^\OS$ in~\eqref{eq:diffOS} using Algorithm \ref{alg:oneStep}. This illustrates the main interest of the one-step estimator: it can be implemented using any differentiable programming framework which provides a \texttt{stop\_gradient} primitive and does not require custom implementation of implicit differentiation.
\Cref{fig:newton} illustrates an implementation in \texttt{jax} for gradient descent.
\begin{figure}
    \centering
      \begin{multicols}{2}
\begin{minted}[fontsize=\scriptsize,highlightlines={21-22},highlightcolor=red!20]{python}
def F(x, theta):
    # here a gradient step
    return x - alpha * grad_f(x, theta)

def iterative_procedure(theta, x0):
    x = x0
    for _ in range(100):
        x = F(x, theta)
    return x

# Automatic differentiation
J_ad = jacfwd(iterative_procedure)(theta, x0)

# Implicit differentiation
J_id = # Custom implementation (e.g. jaxopt)

# One-step differentiation
def iterative_procedure_with_stop_grad(theta, x0):
    x = x0
    for _ in range(99):
        x = F(stop_gradient(x),
              stop_gradient(theta))
    x = F(x, theta)
    return x
J_os = jacfwd(iterative_procedure)(theta, x0)
\end{minted}
  \end{multicols}
    \vspace{-0.8cm}
    \caption{Implementation of Algorithms~\ref{alg:autodiff},~\ref{alg:implicitDiff} and \ref{alg:oneStep} in \texttt{jax}. $F$ is a gradient step of some function $f$. The custom implementation of implicit differentiation is not explicited. The function \texttt{stop\_gradient} is present in \texttt{jax.lax} and \texttt{jacfwd} computes the full Jacobian using forward-mode AD.} 
    \label{fig:newton}
\end{figure}

\subsection{Approximation analysis of one step differentiation for linearly convergent algorithms}
The following lemma is elementary.
It describes the main mathematical mechanism at stake behind our analysis of one-step differentiation.
\begin{lemma}\label{lem:matrixIdentity}
    Let $A \in \RR^{n \times n}$ with $\|A\|_{\op} \leq \rho < 1$ and $B,\tilde{B} \in \RR^{n \times m}$, then
    \begin{align*}
      (I -A)^{-1}B - \tilde{B} &= A(I - A)^{-1}  B + B - \tilde{B}.
    \end{align*}
    Moreover, we have the following estimate,
    \begin{align*}
      \|(I -A)^{-1}B - \tilde{B} \|_\op&\leq \frac{\rho}{1 - \rho}\|B\|_\op   + \|B - \tilde{B}\|_\op.
    \end{align*}
\end{lemma}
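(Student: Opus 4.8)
The plan is to first record that the hypothesis $\|A\|_{\op}\le\rho<1$ guarantees invertibility of $I-A$, then to isolate a purely algebraic identity (the resolvent relation) from which both claims follow at once. The whole argument is elementary, as the authors indicate; the only substance is bookkeeping with the operator norm.

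First I would establish invertibility together with the key norm bound on the resolvent. Since $\|A\|_{\op}\le\rho<1$, the Neumann series $\sum_{j\ge 0}A^j$ converges in operator norm to $(I-A)^{-1}$, and hence
\[
  \|(I-A)^{-1}\|_{\op}\;\le\;\sum_{j=0}^\infty\|A\|_{\op}^{\,j}\;\le\;\sum_{j=0}^\infty\rho^{\,j}\;=\;\frac{1}{1-\rho}.
\]
Second, I would derive the stated identity. Starting from $(I-A)^{-1}(I-A)=I$ and distributing, one obtains $(I-A)^{-1}-A(I-A)^{-1}=I$, using that $A$ commutes with $(I-A)^{-1}$, so that $(I-A)^{-1}=I+A(I-A)^{-1}$. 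Multiplying on the right by $B$ and subtracting $\tilde B$ gives exactly
\[
  (I-A)^{-1}B-\tilde B \;=\; A(I-A)^{-1}B \;+\; B \;-\; \tilde B,
\]
which is the first assertion.

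Third, for the estimate I would take operator norms in this identity, use the triangle inequality to peel off the $B-\tilde B$ term, and bound the remaining term by submultiplicativity combined with the first step:
\[
  \|A(I-A)^{-1}B\|_{\op}\;\le\;\|A\|_{\op}\,\|(I-A)^{-1}\|_{\op}\,\|B\|_{\op}\;\le\;\frac{\rho}{1-\rho}\,\|B\|_{\op}.
\]
Adding $\|B-\tilde B\|_{\op}$ yields the claimed inequality.

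As for the main obstacle, there is essentially none of substance. The two points deserving a word of care are the commutation of $A$ with its own resolvent, which is immediate since any matrix commutes with power series in itself, and the Neumann-series bound, which is standard. I would therefore expect the proof to consist of these three short steps with no genuine difficulty; the interest of the lemma lies not in its proof but in the way the right-hand side cleanly separates a contraction-factor term $\tfrac{\rho}{1-\rho}\|B\|_{\op}$ from an approximation term $\|B-\tilde B\|_{\op}$, which is precisely the decomposition exploited later when $A=J_xF$, $B=J_\theta F$ at $x_{k-1}$ and $\tilde B=J^{\OS}x_k$.
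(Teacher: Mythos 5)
Your proof is correct and follows essentially the same route as the paper: establish $\|(I-A)^{-1}\|_{\op}\le 1/(1-\rho)$, verify the resolvent identity $(I-A)^{-1}=I+A(I-A)^{-1}$, multiply by $B$, and conclude by the triangle inequality and submultiplicativity. The only cosmetic difference is that you obtain invertibility and the resolvent bound via the Neumann series, whereas the paper uses the lower bound $\|(I-A)v\|\ge(1-\rho)\|v\|$; both are standard and equivalent here.
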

\begin{proof}
    First for any $v \in \RR^n$, we have $\|(I-A)v\| \geq \|Iv\| - \|Av\| \geq (1-\rho)\|v\|$,
    which shows that $I-A$ is invertible (the kernel of $I-A$ is trivial). We also deduce that $\|(I-A)^{-1}\|_\op \leq 1 / (1 - \rho)$.
    Second, we have $(I-A)^{-1} - I = A(I-A)^{-1}$, since $((I-A)^{-1} - I)(I-A) = A$, and therefore
    \begin{align*}
       A(I - A)^{-1}  B + B - \tilde{B} &=((I-A)^{-1} - I)  B + B - \tilde{B} = (I-A)^{-1}B - \tilde{B}.
    \end{align*}
    The norm bound follows using the submultiplicativity of operator norm, the triangular inequality and the fact that $\|A\|_{\op} \leq \rho$ and $\|(I-A)^{-1}\|_\op \leq 1/(1 - \rho)$.
\end{proof}
\begin{corollary}\label{cor:approxOS}
    Let $F$ and $\mathcal{X}$ be as in \Cref{ass:mainAssumption} such that $\theta \mapsto F(x,\theta)$ is $L_F$ Lipschitz and $x \mapsto J_\theta F(x,\theta)$ is $L_J$ Lipschitz (in operator norm) for all $x \in \RR^n$. Then,  for all $\theta \in \RR^m$,
    \begin{align}
        \|J^\OS x_k(\theta) - J_\theta \bar{x}(\theta)\|_\op \leq \frac{\rho L_F}{1 - \rho} + L_J \|x_{k-1} - \bar{x}(\theta)\|.
        \label{eq:oneStepJacobianBound}
    \end{align} \label{cor:jacobianBound}
\end{corollary}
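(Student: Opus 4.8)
The plan is to reduce the whole statement to the matrix identity of \Cref{lem:matrixIdentity} by recognizing the true Jacobian $J_\theta \bar{x}(\theta)$ as a term of the form $(I-A)^{-1}B$. First I would differentiate the fixed-point relation $\bar{x}(\theta) = F(\bar{x}(\theta), \theta)$ with respect to $\theta$, which is legitimate since \Cref{lem:fixedPoint} guarantees that $\bar{x}$ is $C^1$. The chain rule yields $J_\theta \bar{x}(\theta) = J_x F(\bar{x}, \theta)\, J_\theta \bar{x}(\theta) + J_\theta F(\bar{x}, \theta)$, and solving for $J_\theta \bar{x}$ gives $J_\theta \bar{x}(\theta) = (I - J_x F(\bar{x}, \theta))^{-1} J_\theta F(\bar{x}, \theta)$, where the inverse exists because $\|J_x F(\bar{x}, \theta)\|_\op \leq \rho < 1$ under \Cref{ass:mainAssumption}.

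Next I would apply \Cref{lem:matrixIdentity} with $A = J_x F(\bar{x}, \theta)$, $B = J_\theta F(\bar{x}, \theta)$, and $\tilde{B} = J_\theta F(x_{k-1}, \theta) = J^\OS x_k(\theta)$, the last equality being exactly the definition \eqref{eq:diffOS} of the one-step estimator. With these identifications $(I-A)^{-1}B = J_\theta \bar{x}(\theta)$, so the norm estimate of the lemma reads
\[
\|J_\theta \bar{x}(\theta) - J^\OS x_k(\theta)\|_\op \leq \frac{\rho}{1-\rho} \|J_\theta F(\bar{x}, \theta)\|_\op + \|J_\theta F(\bar{x}, \theta) - J_\theta F(x_{k-1}, \theta)\|_\op.
\]

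It then remains to control the two terms on the right by Lipschitz arguments. For the first, since $\theta \mapsto F(x, \theta)$ is $L_F$-Lipschitz, its partial Jacobian satisfies $\|J_\theta F(\bar{x}, \theta)\|_\op \leq L_F$ pointwise, which gives the $\tfrac{\rho L_F}{1-\rho}$ contribution. For the second, the assumed $L_J$-Lipschitz continuity of $x \mapsto J_\theta F(x, \theta)$ in operator norm yields $\|J_\theta F(\bar{x}, \theta) - J_\theta F(x_{k-1}, \theta)\|_\op \leq L_J \|\bar{x}(\theta) - x_{k-1}\|$. Substituting both bounds produces exactly \eqref{eq:oneStepJacobianBound}.

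I do not expect a genuine obstacle here; the work is essentially bookkeeping once the lemma is in place. The one conceptual step that carries the argument is the choice of $\tilde{B}$: taking $\tilde{B}$ equal to the one-step estimator is what makes the abstract difference $(I-A)^{-1}B - \tilde{B}$ coincide with the quantity to be bounded, and the decomposition of \Cref{lem:matrixIdentity} then automatically splits the error into a ``lack of contractivity'' part scaling with $\rho$ and a ``distance to the solution'' part scaling with $\|x_{k-1} - \bar{x}\|$. The only point requiring a little care is justifying $\|J_\theta F(\bar{x},\theta)\|_\op \le L_F$, i.e.\ that a Lipschitz constant of a differentiable map bounds the operator norm of its Jacobian everywhere, which is standard.
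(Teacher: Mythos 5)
Your proposal is correct and matches the paper's own proof: the paper applies \Cref{lem:matrixIdentity} with exactly the same choices $A = J_x F(\bar{x}(\theta),\theta)$, $B = J_\theta F(\bar{x}(\theta),\theta)$, $\tilde{B} = J_\theta F(x_{k-1},\theta)$, and uses the same two Lipschitz bounds $\|B\|_\op \leq L_F$ and $\|B - \tilde{B}\|_\op \leq L_J\|\bar{x}(\theta) - x_{k-1}\|$. Your explicit derivation of $J_\theta\bar{x}(\theta) = (I-A)^{-1}B$ from the fixed-point equation is the (unstated) step the paper relies on, so nothing is missing.
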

\begin{proof}
   The result follows from \Cref{lem:matrixIdentity} with $A = J_x F(\bar{x}(\theta),\theta)$, $B = J_\theta F(\bar{x}(\theta),\theta)$ and $\tilde{B} = J_\theta F(x_{k-1},\theta)$ using the fact that $\|B\|_\op \leq L_F$ and $\|B - \tilde{B}\|_\op \leq L_J \|\bar{x}(\theta) - x_{k-1}\|$.
\end{proof}

\begin{remark}[Comparison with implicit differentiation]
\label{rem:implicitDiff}{\rm
In \cite{blondel2022efficient} a similar bound is described for $J^{\ID}$, roughly under the assumption that $x \to F(x,\theta)$ also has $L_J$ Lipschitz Jacobian, one has
\begin{align}    \label{eq:implicitJacobianBound}
    \|J^{\ID} x_k(\theta) - J_\theta \bar{x}(\theta)\|_\op \leq \frac{L_J L_F}{(1 - \rho)^2}\|x_k - \bar{x}(\theta)\| + \frac{L_J}{1 - \rho} \|x_k - \bar{x}(\theta)\|.
\end{align}
For small $\rho$ and large $k$, the main difference between the two bounds \eqref{eq:oneStepJacobianBound} and \eqref{eq:implicitJacobianBound} lies in their first term which is of the same order whenever $\rho$ and $L_J \|\bar{x} - x_{k-1}\|$ are of the same order.    }
\end{remark}

Corollary \ref{cor:jacobianBound} provides a bound on $\|J^\OS x_k(\theta) - J_\theta \bar{x}(\theta)\|_\op$ which is asymptotically proportional to $\rho$. This means that for fast linearly convergent algorithms, meaning $\rho \ll 1$, one-step differentiation provides a good approximation of the actual derivative.
Besides, given $F$ which satisfies \Cref{ass:mainAssumption}, with a given $\rho<1$, not specially small, one can set $\tilde{F}_\theta = F_\theta^K$ for some $K \in \NN$. In this  case, $\tilde{F}$ satisfies assumption \ref{ass:mainAssumption} with $\tilde{\rho} = \rho^K$ and the one-step estimator in \eqref{eq:diffOS} applied to $\tilde{F}$ becomes a $K$-steps estimator on $F$ itself, in other words, we only differentiate through the $K$ last steps of the algorithm.

\begin{example}[Gradient descent]{\rm    Let $f \colon \RR^{n} \times \RR^m \to \RR$ be such that $f(\cdot, \theta)$ is $\mu$-strongly convex ($\mu > 0$) with $L$ Lipschitz gradient for all $\theta \in \RR^m$, then the gradient mapping $F \colon (x,\theta) \mapsto x - \alpha \nabla_x f(x,\theta)$ satisfies \Cref{ass:mainAssumption} with $\rho = \max\{1 - \alpha\mu, \alpha L - 1\}$ which is smaller than $1$ as long as $0 < \alpha < 2 /L$. The optimal step size $\alpha = 2 /( L+\mu)$ leads to a contraction factor $\rho = 1-2\mu / (L + \mu)$. Assuming that $\nabla_{x\theta}^2 f$ is also $L$ Lipschitz (in operator norm), \Cref{cor:approxOS} holds with $L_F = L_J = 2 L / (\mu + L) \leq 2$. For step size $1/L$, \Cref{cor:jacobianBound} holds with $L_F = L_J \leq 1$ and $\rho = 1 - \mu/L$. In both cases, the contraction factor $\rho$ is close to $0$ when $L / \mu \simeq 1$, that is the problem is well conditioned. As outlined above, we may consider $\tilde{F}_\theta = F_\theta^K$ in which case \Cref{cor:jacobianBound} applies with a smaller value of $\rho$.}
\end{example}

\subsection{Superlinar  and quadratic algorithms}
\label{sec:estimateAlgorithms}
The one-step Jacobian estimator in \eqref{eq:diffOS} as implemented in Algorithm \ref{alg:oneStep} is obviously not an exact estimator in the sense that one does not necessarily have $J^{\OS} x_k(\theta) \to J_\theta \bar{x}(\theta)$ as $k \to \infty$. However, it is easy to see that this estimator is exact in the case of exact single-step algorithms, meaning $F$ satisfies $F(x,\theta) = \bar{x}(\theta)$ for all $x,\theta$. Indeed, in this case, one has $J_x F(x,\theta) = 0$ and $J_\theta F(x,\theta) = J_\theta \bar{x}(\theta)$ for all $x,\theta$. Such a situation occurs, for example, when applying Newton's method to an unconstrained quadratic problem. This is a very degenerate situation as it does not really make sense to talk about an ``iterative algorithm'' in this case. It turns out that this property of being ``asymptotically correct'' remains valid for very fast algorithms, that is, algorithms that require few iterations to converge, the archetypical example being Newton's method for which we obtain quantitative estimates.

\subsubsection{Super-linear algorithms}
The following is a typical property of fast converging algorithms.
\begin{assumption}[Vanishing Jacobian]{\rm 
     Assume that $F \colon \RR^{n} \times \RR^m \to \RR^n$ is $C^1$ and that the recursion $x_{k+1} = F_\theta(x_k)$ converges globally, locally uniformly in $\theta$ to the unique fixed point $\bar{x}(\theta)$ of $F_\theta$ such that  $J_x F(\bar{x}(\theta),\theta) = 0$.
     \label{ass:superLinear}}
\end{assumption}
Note that under \Cref{ass:superLinear}, it is always possible to find a small neighborhood of $\bar{x}$ such that $\|J_x F_\theta\|_{\op}$ remains small, that is, \Cref{ass:mainAssumption} holds locally and \Cref{lem:fixedPoint} applies. Furthermore, it is possible to show that the derivative estimate is asymptotically correct as follows. 
\begin{corollary}[Jacobian convergence]\label{cor:superlinearCvgt}
    \label{cor:superLinear}
    Let $F \colon \RR^{n} \times \RR^m \to \RR^n$ be as in \Cref{ass:superLinear}. Then $J^{\OS} x_k(\theta) \to J_\theta \bar{x}(\theta)$ as $k \to \infty$, and $J^{\OS} \bar{x}(\theta) = J_\theta \bar{x}(\theta)$.
\end{corollary}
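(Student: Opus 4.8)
The plan is to prove the two assertions in reverse order: first establish the exact identity $J^{\OS}\bar{x}(\theta) = J_\theta \bar{x}(\theta)$ at the fixed point, and then deduce the asymptotic convergence by a continuity argument. The identity at the fixed point is the heart of the matter, and it is what explains why the vanishing-Jacobian hypothesis makes the one-step estimator asymptotically correct.

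First I would record that, as observed right after \Cref{ass:superLinear}, \Cref{ass:mainAssumption} holds on a neighborhood of $\bar{x}(\theta)$, so \Cref{lem:fixedPoint} applies locally and $\theta \mapsto \bar{x}(\theta)$ is $C^1$ with a well-defined Jacobian $J_\theta \bar{x}(\theta)$. Starting the recursion at $x_0 = \bar{x}(\theta)$ leaves every iterate equal to the fixed point, so by the definition \eqref{eq:diffOS} of the one-step estimator one has directly $J^{\OS}\bar{x}(\theta) = J_\theta F(\bar{x}(\theta), \theta)$.

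Next I would differentiate the fixed-point identity $\bar{x}(\theta) = F(\bar{x}(\theta), \theta)$ with respect to $\theta$. The chain rule gives
\begin{equation*}
J_\theta \bar{x}(\theta) = J_x F(\bar{x}(\theta), \theta)\, J_\theta \bar{x}(\theta) + J_\theta F(\bar{x}(\theta), \theta).
\end{equation*}
Invoking the vanishing-Jacobian hypothesis $J_x F(\bar{x}(\theta), \theta) = 0$ annihilates the first term on the right, yielding $J_\theta \bar{x}(\theta) = J_\theta F(\bar{x}(\theta), \theta) = J^{\OS}\bar{x}(\theta)$, which is the second assertion.

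Finally, for the convergence $J^{\OS} x_k(\theta) \to J_\theta \bar{x}(\theta)$, I would use $J^{\OS} x_k(\theta) = J_\theta F(x_{k-1}(\theta), \theta)$ together with $x_{k-1}(\theta) \to \bar{x}(\theta)$, which holds by the assumed global convergence of the recursion. Since $F$ is $C^1$, the map $x \mapsto J_\theta F(x, \theta)$ is continuous, so $J_\theta F(x_{k-1}(\theta), \theta) \to J_\theta F(\bar{x}(\theta), \theta)$, and the latter equals $J_\theta \bar{x}(\theta)$ by the identity just proved. There is no serious obstacle here: the only step requiring care is the differentiability of $\bar{x}$, which is secured by the local validity of the contraction assumption, and once that is in place the remainder is purely a continuity argument.
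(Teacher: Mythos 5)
Your proposal is correct and follows essentially the same route as the paper: differentiate the fixed-point equation, use the vanishing Jacobian $J_x F(\bar{x}(\theta),\theta)=0$ to get $J_\theta \bar{x}(\theta) = J_\theta F(\bar{x}(\theta),\theta)$, and conclude by continuity of the derivatives. You merely spell out the steps (local validity of \Cref{ass:mainAssumption}, the chain-rule expansion, and the continuity of $x \mapsto J_\theta F(x,\theta)$) that the paper's one-line proof leaves implicit.
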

\begin{proof}
    Since $J_x F(\bar{x}(\theta),\theta) = 0$, implicit differentiation of the fixed point equation reduces to $J_\theta \bar{x}(\theta) = J_\theta F(\bar{x}(\theta), \theta)$, and the result follows by continuity of the derivatives.
\end{proof}

\begin{example}[Superlinearly convergent algorithm]{\rm 
    Assume that $F$ is $C^1$ and for each $\rho > 0$, there is $R>0$ such that $\|F_\theta(x) - \bar{x}(\theta)\| \leq \rho \|x - \bar{x}(\theta)\|$ for all $x,\theta$ such that $\|x - \bar{x}(\theta)\| \leq R$. Then \Cref{cor:superLinear} applies as for any $v$
    \begin{align*}
        J_x F(\bar{x}(\theta), \theta) v = \lim_{t \to 0}  \frac{F(\bar{x}(\theta)+tv, \theta) - \bar{x}(\theta)}{t}  = 0.
    \end{align*}}
\end{example}

\subsubsection{Quadratically convergent algorithms}

\begin{wrapfigure}{r}{0.3\textwidth}
  \vspace{-30pt}
      \includegraphics[width=.3\textwidth]{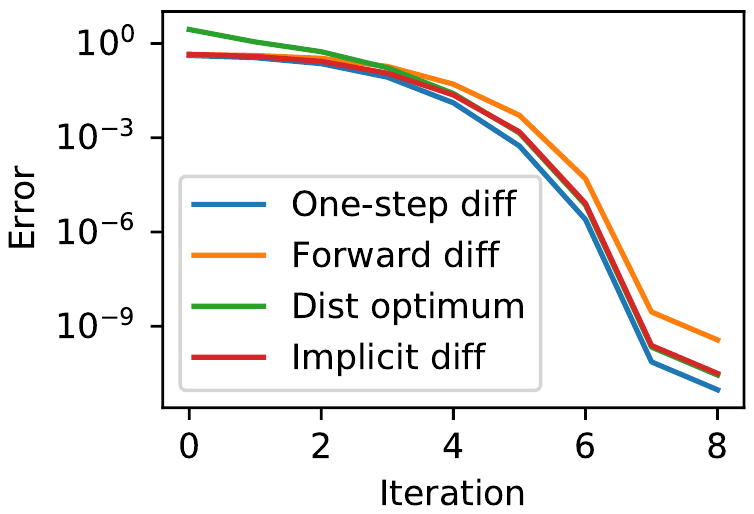}
      \caption{Newton's method quadratic convergence.}
      \label{fig:illustrQuadraticConvergence}
      \vskip-.3in
\end{wrapfigure}

Under additional quantitative assumptions, it is possible to obtain more precise convergence estimates similar to those obtained for implicit differentiation, see \Cref{rem:implicitDiff}.
\begin{corollary}\label{cor:quadCvgt}
    Let $F$ be as in \Cref{ass:superLinear} such that $x \mapsto J_{(x,\theta)} F(x,\theta)$ (joint jacobian in $(x,\theta)$) is $L_J$ Lipschitz (in operator norm). Then, the recursion is asymptotically quadratically convergent and for each $k \geq 1$,
    \begin{align}
        \|J^\OS x_k(\theta) - J_\theta \bar{x}(\theta)\|_\op \leq L_J \|x_{k-1}(\theta) - \bar{x}(\theta)\| .
        \label{eq:oneStepJacobianBoundQuadConv}
    \end{align}
    \label{cor:jacobianBoundQuadConv}
\end{corollary}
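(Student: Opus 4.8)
The plan is to combine the fixed-point identity already established in the proof of \Cref{cor:superLinear} with a Lipschitz estimate on the parameter-Jacobian of $F$, and then to establish the quadratic rate separately via a Taylor expansion exploiting the vanishing-Jacobian condition. First I would recall that, as in \Cref{cor:superLinear}, the hypothesis $J_x F(\bar{x}(\theta),\theta) = 0$ together with implicit differentiation of the fixed-point equation $\bar{x}(\theta) = F(\bar{x}(\theta),\theta)$ yields the clean identity $J_\theta \bar{x}(\theta) = J_\theta F(\bar{x}(\theta),\theta)$. Since the one-step estimator is by definition $J^\OS x_k(\theta) = J_\theta F(x_{k-1}(\theta),\theta)$, the error to be bounded is exactly the difference of the parameter-Jacobian of $F$ evaluated at $x_{k-1}(\theta)$ and at $\bar{x}(\theta)$:
\begin{align*}
    J^\OS x_k(\theta) - J_\theta \bar{x}(\theta) = J_\theta F(x_{k-1}(\theta),\theta) - J_\theta F(\bar{x}(\theta),\theta).
\end{align*}

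Next I would deduce \eqref{eq:oneStepJacobianBoundQuadConv} from the assumed Lipschitz continuity of the \emph{joint} Jacobian. The key elementary observation is that $J_\theta F$ is a sub-block of the joint Jacobian $J_{(x,\theta)} F$, and the operator norm of a block is dominated by the operator norm of the full matrix (take test vectors supported on the relevant coordinates). Hence, holding $\theta$ fixed and varying only $x$,
\begin{align*}
    \|J_\theta F(x_{k-1},\theta) - J_\theta F(\bar{x},\theta)\|_\op \leq \|J_{(x,\theta)} F(x_{k-1},\theta) - J_{(x,\theta)} F(\bar{x},\theta)\|_\op \leq L_J \|x_{k-1} - \bar{x}(\theta)\|,
\end{align*}
which is precisely the claimed bound.

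Separately, I would establish the asymptotic quadratic convergence. The same sub-block argument shows that $x \mapsto J_x F(x,\theta)$ is itself $L_J$-Lipschitz, so the standard first-order Taylor estimate with integral remainder applied to $g = F(\cdot,\theta)$ gives
\begin{align*}
    \|F(x_{k-1},\theta) - F(\bar{x},\theta) - J_x F(\bar{x},\theta)(x_{k-1} - \bar{x})\| \leq \frac{L_J}{2}\|x_{k-1} - \bar{x}(\theta)\|^2.
\end{align*}
Using the fixed-point relations $x_k = F(x_{k-1},\theta)$ and $\bar{x} = F(\bar{x},\theta)$ together with the hypothesis $J_x F(\bar{x}(\theta),\theta) = 0$, this collapses to $\|x_k - \bar{x}(\theta)\| \leq \frac{L_J}{2}\|x_{k-1} - \bar{x}(\theta)\|^2$, the claimed quadratic rate; since \Cref{ass:superLinear} guarantees $x_k \to \bar{x}(\theta)$, the rate is eventually active.

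The argument is short and I do not anticipate a genuine obstacle; the only points that require a little care are justifying the block-norm domination cleanly and invoking the Taylor remainder in the correct vector-valued integral form. One subtlety worth flagging is that \Cref{ass:superLinear} is stated as a convergence hypothesis, whereas the Lipschitz assumption on the joint Jacobian is global in $x$; because the latter holds for all $x$, the bound on $\|J^\OS x_k(\theta) - J_\theta \bar{x}(\theta)\|_\op$ is valid for every $k \geq 1$ regardless of how close the iterate already is, and only the \emph{smallness} of the right-hand side (hence the usefulness of the estimate) becomes effective once $x_{k-1}$ enters the quadratically convergent regime.
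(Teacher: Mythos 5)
Your proof is correct and follows essentially the same route as the paper: the identity $J_\theta \bar{x}(\theta) = J_\theta F(\bar{x}(\theta),\theta)$ from the superlinear case plus Lipschitzness of the joint Jacobian for the bound, and a first-order Taylor/integral-remainder estimate with $J_xF(\bar{x}(\theta),\theta)=0$ for the quadratic rate. Your explicit justification of the sub-block operator-norm domination is a detail the paper leaves implicit, but it is not a different argument.
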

\begin{proof}
Following the same argument as in the proof of \Cref{cor:superLinear}, we have
    \begin{align}
        \|J^\OS x_k(\theta) - J_\theta \bar{x}(\theta)\|_\op & = \|J_\theta F(x_k(\theta),\theta) - J_\theta F(\bar{x}(\theta),\theta)\|_\op \leq L_J \|x_{k-1}(\theta) - \bar{x}(\theta)\| .
        \label{eq:oneStepJacobianBoundQuadConvExplicit}
    \end{align}
As for the quadratic convergence, we may assume that $\bar{x}(\theta) = 0$ and  drop the $\theta$ variable to simplify notations. We have $F(0) = 0$ and for all $x$,
   $$F(x) = \int_0^1 J_xF(tx) x dt \leq \|x\| \int_0^1 \|J_xF(tx)\|_{\op}  dt \leq \|x\|^2L_J \int_0^1 t  dt = \frac{L_J\|x\|^2}{2}.
    $$
Thus $L_J/2\|x_{k+1}\|\leq [L_J/2 \|x_k\|]^2$, and asymptotic quadratic convergence follows.\end{proof}

\begin{example}[Newton's algorithm]{\rm
    Assume that $f \colon \RR^n \times \RR^m \to \RR$ is $C^3$ with Lipschitz derivatives, and for each $\theta$, $x \mapsto f(x,\theta)$ is $\mu$-strongly convex. Then Newton's algorithm with backtracking line search satisfies the hypotheses of \Cref{cor:jacobianBoundQuadConv}, see \cite[Sec. 9.5.3]{boyd2004book}. Indeed, it takes unit steps after a finite number of iterations, denoting by $\bar{x}(\theta)$ the unique solution to $\nabla_x f(x,\theta) = 0$, for all $x$ locally around $\bar{x}(\theta)$
    \begin{align*}
        F(x,\theta) = x - \nabla^2_{xx} f(x,\theta)^{-1} \nabla_x f(x,\theta).
    \end{align*}
    We have, $\nabla^2_{xx} f(x,\theta)F(x,\theta) = \nabla^2_{xx} f(x,\theta)x - \nabla_x f(x,\theta)$, differentiating using tensor notations,
    \begin{align*}
        \nabla_{xx}^2 f(x,\theta)J_x F(x,\theta) = \nabla^3 f(x,\theta)[x,\cdot,\cdot] - \nabla^3 f(x,\theta)[F(x, \theta),\cdot,\cdot]
    \end{align*}
    so that $J_x F(\bar{x}(\theta),\theta) = 0$ and  Lipschitz continuity of the derivatives of $f$ implies Lipschitz continuity of $J_x F(x,\theta)$ using the fact that $\nabla_{xx}^2 f(x,\theta) \succeq \mu I$ and that matrix inversion is smooth and Lipschitz for such matrices.
    The quadratic convergence of the three derivative estimates for Newton's method is illustrated on a logistic regression example in \Cref{fig:illustrQuadraticConvergence}.}
\end{example}

\section{Hypergradient descent for bilevel problems}
\label{sec:bilevelOptimization}

Consider the following bilevel optimization problem 
\begin{align*}
    \min_\theta \quad g(x(\theta))
    \quad \mathrm{s.t.}\quad x(\theta) \in {\arg\min}_y f(y,\theta),
\end{align*}
where $g$ and $f$ are $C^1$ functions. We will consider bilevel problems such that the inner minimum is uniquely attained and can be described as a fixed point equation $x = F(x,\theta)$ where $F$ is as in \Cref{ass:mainAssumption}. The problem may then be rewritten as
\begin{align}
    \min_\theta \quad g(x(\theta)) 
    \quad\mathrm{s.t.}\quad x(\theta) = F(x(\theta),\theta), \label{eq:biLevel}
\end{align}
see illustrations in \Cref{ex:contractionsOpti}. Gradient descent (or hyper-gradient) on \eqref{eq:biLevel} using our one-step estimator in \eqref{eq:diffOS} consists in the following recursion
\begin{align}
    \theta_{i+1}  = \theta_i - \alpha J^\OS x_k(\theta_i)^T \nabla g(x_k),
    \label{eq:hyperGradientOS}
\end{align}
where $\alpha > 0$ is a step size parameter. Note that the quantity $J^\OS x_k(\theta)^T \nabla g(x_k)$ is exactly what is obtained by applying backpropagation to the composition of $g$ and Algorithm \ref{alg:oneStep}, without any further custom variation on backpropagation. This section is dedicated to the theoretical guaranties which can be obtained using such a procedure, proofs are postponed to \Cref{app:proofBilevel} .

\subsection{Complexity analysis of different hypergradient strategies}
We essentially follow the complexity considerations in \cite[Section 4.6]{griewank2008evaluating}.
Let $C_F$ denote the computation time cost of evaluating the fixed-point map $F$ and $\omega >0$ be the multiplicative overhead of gradient evaluation, in typical applications, $\omega \leq 5$ (cheap gradient principle~\cite{baur1983complexity}). The time cost of evaluating the Jacobian of $F$ is $n \omega C_F$ ($n$ gradients). Forward algorithm evaluation (i.e., $x_k$) has computational time cost $k C_F$ with a fixed memory complexity $n$.
Vanilla piggyback recursion~\eqref{eq:prodjac} requires $k-1$ full Jacobians and matrix multiplications of costs $n^2 m$.
The reverse-mode of AD has time complexity $k \omega C_F$ (cheap gradient principle on $F_\theta^k$) and requires to store $k$ vectors of size $n$.
Implicit differentiation requires \emph{one} full Jacobian ($\omega C_Fn$) and solution of \emph{one} linear system of size $n \times n$, that is roughly $n^3$.
Finally, one-step differentiation is given by only differentiating a single step of the algorithm at cost $ \omega C_F$.
For each estimate, distance to the solution will result in derivative errors. In addition, automatic differentiation based estimates may suffer from the burn-in effect \cite{scieur2022curse} while one-step differentiation will suffer from a lack of contractivity as in \Cref{cor:jacobianBound}. We summarize the discussion in Table~\ref{tab:complexity}. Let us remark that, if $C_F \geq n^2$, then reverse AD has a computational advantage if $k \leq n$, which makes sense for fast converging algorithms, but in this case, one-step differentiation has a small error and a computational advantage compared to reverse AD.

\begin{remark}[Implicit differentiation: but on which equation?]{\rm \Cref{tab:complexity}  is informative yet formal. In practical scenarios, implicit differentiation should be performed using the simplest equation available, not necessarily $F=0$. This can significantly affect the computational time required. For instance, when using Newton's method $F=-[\nabla^2f]^{-1}\nabla f$, implicit differentiation should be applied to the gradient mapping $\nabla f=0$, not $F$. In typical application $C_{\nabla f} = O(n^2)$, and the dominant cost of implicit differentiation is $O(n^3)$, which is of the same order as the one-step differentiation as $C_F= O(n^3)$ (a linear system needs to be inverted). However, if the implicit step was performed on $F$ instead of $\nabla f$, it would incur a prohibitive cost of $O(n^4)$. In conclusion, the implicit differentiation phase is not only custom in terms of the implementation, but also in the very choice of the equation. }
\end{remark}

\begin{table}[t]
     
    \centering
    \begin{tabular}{lccc}
        \toprule
        Method & Time & Memory & Error sources \\ \midrule
        Piggyback recursion & $kn (\omega C_F + n m)$ & $n(n+m)$ & suboptimality + burn-in \\ 
        AD reverse-mode & $k \omega C_F$ & $k n$ & suboptimality + burn-in \\
        Implicit differentiation & $\omega C_Fn + n^3$ & $n$ &  suboptimality \\ 
        One-step differentiation & $\omega C_F$ & $n$ & suboptimality + lack of contractivity\\ \midrule
        Forward algorithm & $k C_F $ & $n$ & suboptimality \\
        \bottomrule
    \end{tabular}  
    \caption{Time and memory complexities of the estimators in \Cref{sec:threeAlgorithms} (up to multiplicative constant). $F$ has time complexity denoted by $C_F$ and we consider $k$ iterations in $\RR^n$ with $m$ parameter. $\omega$ denotes the multiplicative overhead of evaluating a gradient (cheap gradient principle).\label{tab:complexity} }
    \vskip -.2in
    
\end{table}
\subsection{Approximation analysis of one step differentiation}
The following corollary provides a bound on gradients for problem \eqref{eq:biLevel} obtained by the one-step differentiation as in Algorithm \ref{alg:oneStep}. The bound depends on $\rho$, the contraction factor, and distance to the solution for the inner algorithm.
 The proof is given in \Cref{app:proofBilevel}.
\begin{corollary}
    Let $F \colon \RR^n \times \RR^m \to \RR^n$ be as in \Cref{cor:jacobianBound} and consider the bilevel problem \eqref{eq:biLevel}, where $g$ is a $C^1$, $l_g$ Lipschitz function with $l_\nabla$ Lipschitz gradient. Then, 
    \begin{align*}
        \left\| \nabla_\theta (g \circ \bar{x})(\theta) - J^\OS x_k(\theta)^T \nabla g(x_k) \right\| \leq \frac{\rho L_F l_g}{1 - \rho} + L_Jl_g \|x_{k-1} - \bar{x}(\theta)\| + L_F l_\nabla \|\bar{x}(\theta) - x_k\| .
    \end{align*}
    \label{cor:bilevel}
\end{corollary}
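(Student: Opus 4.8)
The plan is to reduce the statement to \Cref{cor:approxOS} via a single triangle-inequality split, with the entire difficulty concentrated in choosing the right intermediate term. First I would write the exact hypergradient through the chain rule as $\nabla_\theta (g \circ \bar{x})(\theta) = J_\theta \bar{x}(\theta)^T \nabla g(\bar{x}(\theta))$, which is legitimate since $\bar{x}$ is $C^1$ by \Cref{lem:fixedPoint} and $g$ is $C^1$; the estimator is $J^\OS x_k(\theta)^T \nabla g(x_k)$ with $J^\OS x_k(\theta) = J_\theta F(x_{k-1},\theta)$ by \eqref{eq:diffOS}. I would then insert the intermediate term $J^\OS x_k(\theta)^T \nabla g(\bar{x}(\theta))$ — that is, the \emph{estimator's} Jacobian paired with the \emph{exact} gradient — to obtain
\begin{align*}
\nabla_\theta (g \circ \bar{x})(\theta) - J^\OS x_k(\theta)^T \nabla g(x_k) &= \bigl(J_\theta \bar{x}(\theta) - J^\OS x_k(\theta)\bigr)^T \nabla g(\bar{x}(\theta)) \\
&\quad + J^\OS x_k(\theta)^T \bigl(\nabla g(\bar{x}(\theta)) - \nabla g(x_k)\bigr).
\end{align*}

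Taking norms, using the triangular inequality, submultiplicativity, and the transpose-invariance of the operator norm leaves two terms. For the first, $\|\nabla g(\bar{x}(\theta))\| \leq l_g$ because $g$ is $l_g$-Lipschitz, and $\|J_\theta \bar{x}(\theta) - J^\OS x_k(\theta)\|_\op$ is precisely the quantity bounded in \Cref{cor:approxOS} by $\frac{\rho L_F}{1-\rho} + L_J \|x_{k-1} - \bar{x}(\theta)\|$; this yields the first two summands of the claim. For the second term, $\|J^\OS x_k(\theta)\|_\op = \|J_\theta F(x_{k-1},\theta)\|_\op \leq L_F$ since $\theta \mapsto F(x,\theta)$ is $L_F$-Lipschitz, and $\|\nabla g(\bar{x}(\theta)) - \nabla g(x_k)\| \leq l_\nabla \|\bar{x}(\theta) - x_k\|$ since $\nabla g$ is $l_\nabla$-Lipschitz; this yields the last summand $L_F l_\nabla \|\bar{x}(\theta) - x_k\|$. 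Adding the three contributions gives the stated inequality.

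The step I expect to matter most is the choice of splitting point; there is no genuine analytic obstacle, since every factor is controlled either by the already-proved \Cref{cor:approxOS} or by one of the Lipschitz hypotheses. The subtlety is that one must pair $J^\OS x_k(\theta)$ with $\nabla g(\bar{x}(\theta))$ rather than with $\nabla g(x_k)$. The alternative split — inserting $J_\theta \bar{x}(\theta)^T \nabla g(x_k)$ — would instead require bounding $\|J_\theta \bar{x}(\theta)\|_\op$, which by implicit differentiation of the fixed-point equation equals $\|(I - J_x F(\bar{x}(\theta),\theta))^{-1} J_\theta F(\bar{x}(\theta),\theta)\|_\op \leq L_F/(1-\rho)$, producing the strictly weaker factor $\frac{L_F l_\nabla}{1-\rho}$ on the last term in place of the sharper $L_F l_\nabla$ claimed here.
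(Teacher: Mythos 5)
Your proposal is correct and follows essentially the same route as the paper's proof: the identical splitting with the intermediate term $J^{\OS} x_k(\theta)^T \nabla g(\bar{x}(\theta))$, the bound $\|\nabla g(\bar{x}(\theta))\| \leq l_g$ paired with \Cref{cor:approxOS}, and the bound $\|J^{\OS} x_k(\theta)\|_\op \leq L_F$ paired with the $l_\nabla$-Lipschitz gradient. Your closing observation about why the alternative split would yield the weaker factor $L_F l_\nabla/(1-\rho)$ is a correct and worthwhile remark, though not part of the paper's argument.
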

\subsection{Approximate critical points}
The following lemma is known, but we provide a proof for completeness in \Cref{app:proofBilevel}.
\begin{lemma}
    Assume that $f \colon \RR^n \to \RR$ is $C^1$ with $L$ Lipschitz gradient and lower bounded by $f^*$. Assume that for some $\epsilon > 0$, for all $n \in \NN$, 
    $
        \left\|x_{k+1} - x_k + \frac{1}{L} \nabla f(x_k)\right\| \leq \epsilon.
    $
    Then, for all $ K \in \NN$, $K \geq 1$, we have
    $$
        \min_{k = 0, \ldots, K} \|\nabla f(x_k)\|^2 \leq \epsilon^2 +  \frac{2(f(x_0) - f^*)}{LK}.
    $$
    \label{lem:approximateCriticalPoints}
\end{lemma}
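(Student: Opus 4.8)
The plan is to recognize the hypothesis as saying that $x_{k+1}$ is a gradient step of stepsize $1/L$ perturbed by a bounded error, and then run the classical descent-lemma argument for inexact gradient descent. Concretely, I would set $e_k := x_{k+1} - x_k + \frac1L \nabla f(x_k)$, so that $\|e_k\| \le \epsilon$ and the update reads $x_{k+1} - x_k = -\frac1L \nabla f(x_k) + e_k$. Since $\nabla f$ is $L$-Lipschitz, the descent inequality gives
\[
f(x_{k+1}) \le f(x_k) + \langle \nabla f(x_k),\, x_{k+1} - x_k\rangle + \frac{L}{2}\|x_{k+1} - x_k\|^2,
\]
into which I substitute the perturbed update.

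First I would expand the right-hand side. The linear term contributes $-\frac1L\|\nabla f(x_k)\|^2 + \langle \nabla f(x_k), e_k\rangle$, while the quadratic term contributes $\frac{1}{2L}\|\nabla f(x_k)\|^2 - \langle \nabla f(x_k), e_k\rangle + \frac{L}{2}\|e_k\|^2$. The key observation is that the two cross terms in $\langle \nabla f(x_k), e_k\rangle$ cancel exactly --- a cancellation special to the stepsize $1/L$ --- leaving the clean per-step estimate
\[
f(x_{k+1}) \le f(x_k) - \frac{1}{2L}\|\nabla f(x_k)\|^2 + \frac{L}{2}\|e_k\|^2 \le f(x_k) - \frac{1}{2L}\|\nabla f(x_k)\|^2 + \frac{L}{2}\epsilon^2.
\]
Summing this telescoping bound over $k = 0,\dots,K-1$ and discarding $f(x_K)$ via the lower bound $f(x_K) \ge f^*$ controls $\sum_{k=0}^{K-1}\|\nabla f(x_k)\|^2$ by $f(x_0)-f^*$ plus a term of order $K\epsilon^2$. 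Bounding the minimum over $k$ by the average (and noting that enlarging the index set from $\{0,\dots,K-1\}$ to $\{0,\dots,K\}$ only decreases the minimum, so the bound is preserved) and then rearranging produces a bound of exactly the stated form.

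I do not expect a genuine obstacle here, as this is a textbook inexact-gradient-descent guarantee. The only point requiring care is the exact cancellation of the cross terms, which is precisely what singles out the stepsize $1/L$ as the natural choice; for any other stepsize one would be left with a residual coupling $\langle \nabla f(x_k), e_k\rangle$ that must be absorbed by Young's inequality at the cost of the constants. The remaining work is the bookkeeping of the powers of $L$ through the telescoping sum and the passage from the average to the minimum, which is where one must be attentive in order to match the displayed constants precisely.
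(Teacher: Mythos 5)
Your approach is the same as the paper's: apply the descent lemma to the iteration viewed as an inexact gradient step with stepsize $1/L$, observe that the error decouples from the gradient term (the paper does this by completing the square, you do it by the exact cancellation of the cross terms --- it is the same computation), telescope, and bound the minimum by the average. Your per-step inequality
\[
f(x_{k+1}) \;\le\; f(x_k) - \tfrac{1}{2L}\|\nabla f(x_k)\|^2 + \tfrac{L}{2}\epsilon^2
\]
is correct.

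The gap is in your final claim that the bookkeeping ``produces a bound of exactly the stated form.'' It does not. Summing your inequality over $k=0,\dots,K-1$, using $f(x_K)\ge f^*$ and $\min\le$ average, you get
\[
\min_{k=0,\dots,K}\|\nabla f(x_k)\|^2 \;\le\; L^2\epsilon^2 + \frac{2L\,(f(x_0)-f^*)}{K},
\]
which is the stated bound multiplied by $L^2$. This is not a harmless constant: the lemma as printed is false for large $L$. Take $f(x)=\tfrac{L}{2}x^2$ in one dimension with $x_k=\epsilon$ for all $k\ge 0$; the hypothesis holds with equality, $\min_k\|\nabla f(x_k)\|^2 = L^2\epsilon^2$, while the stated right-hand side is $\epsilon^2(1+1/K)\le 2\epsilon^2$. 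The discrepancy originates in the paper's own proof, where $\bigl\|x_{k+1}-x_k+\nabla f(x_k)/L\bigr\|^2 - \|\nabla f(x_k)\|^2$ appears in place of the correct $\bigl\|x_{k+1}-x_k+\nabla f(x_k)/L\bigr\|^2 - \|\nabla f(x_k)\|^2/L^2$. So your derivation is sound, but the constants cannot be made to match: either the hypothesis should read $\|x_{k+1}-x_k+\tfrac1L\nabla f(x_k)\|\le \epsilon/L$, or the conclusion should carry the factors $L^2\epsilon^2$ and $2L(f(x_0)-f^*)/K$. Since you explicitly deferred ``the bookkeeping of the powers of $L$'' to the end, this is exactly the step your proposal leaves unresolved, and it cannot be resolved in favor of the statement as written.
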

The combination of \Cref{cor:bilevel} and \Cref{lem:approximateCriticalPoints} applies to the gradient descent algorithm in \eqref{eq:hyperGradientOS} for problem \eqref{eq:biLevel}, and justifies the fact that it finds approximate critical points. The level of approximate criticality depends on $\rho$ and distance to the solution of the inner problem. For large values of $k$ (many steps on the inner problem), approximate criticality is essentially proportional to $\rho$. For superlinear algorithm, as in \Cref{sec:estimateAlgorithms}, approximate criticality is essentially proportional to the distance to the solution of the inner problem, as for implicit differentiation, see \Cref{rem:implicitDiff}.

\section{Numerical experiments}
We illustrate our findings on three different problems. First, we consider Newton's method applied to regularized logistic regression, as well as interior point solver for quadratic problems. These are two fast converging algorithms for which the results of \Cref{sec:estimateAlgorithms} can be applied and the one-step procedure provides accurate estimations of the derivative with a computational overhead negligible with respect to solution evaluation,  as for implicit differentiation. We then consider the gradient descent algorithm applied to a ridge regression problem to illustrate the behavior of the one step procedure in the context of linearly convergent algorithms.

\begin{figure}
    \centering
    \includegraphics[width = \textwidth]{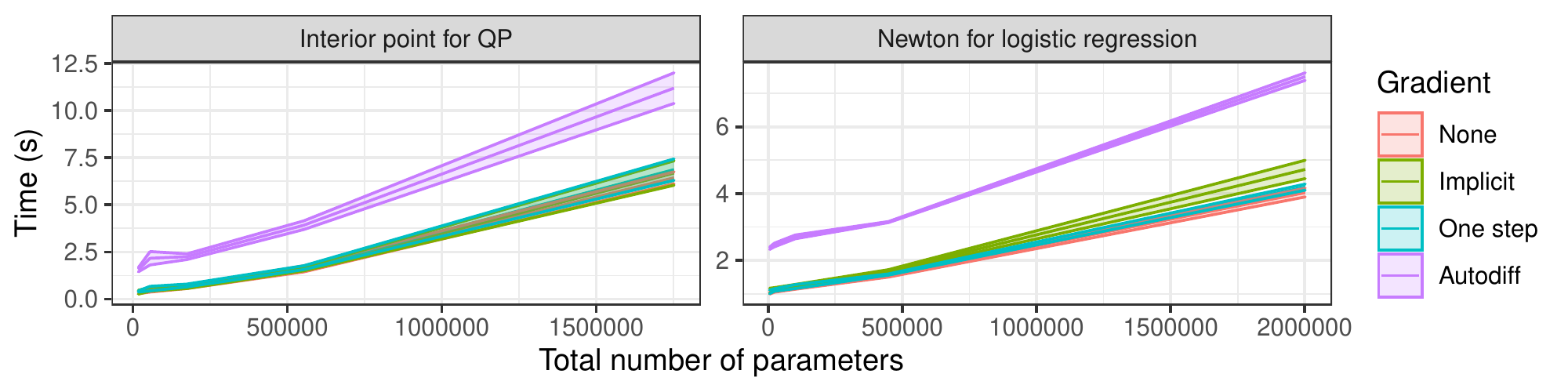}
    \vskip-.1in
    \caption{Left: timing experiment for differentiable quadratic programs. Right timing experiment for differentiation of Newton's algorithm for logistic regression. For Newton's experiment, the one-step estimator coincides with implicit differentiation estimator up to $10^{-12}$ error, and for the interior point experiment, it coincides with the implicit differentiation estimator up to $10^{-6}$ error. Label ``None'' represent solving time and ``Autodiff'', ``Implicit'' and ``One step'' represent solving time and additional evaluation of gradient using each algorithm in \Cref{sec:threeAlgorithms}}
    \label{fig:NewtonIP}
\end{figure}

\paragraph{Logistic regression using Newton's algorithm.}
Let $A \in \RR^{N \times n}$ be a design matrix, the first column being  made of $1$s to model an intercept. Rows are denoted by $(a_1,\ldots, a_N)$. Let $x \in \RR^n$ be a regression parameter and $y \in \{-1,1\}^{N}$. We consider the regularized logistic regression problem
\begin{align}
    \min_{x \in \RR^n} \sum_{i=1}^N \theta_i \ell(\left\langle a_i,x \right\rangle y_i) + \lambda \|x_{-1}\|^2, \label{eq:logisticReg}
\end{align}
where $\ell$ is the logistic loss, $\ell \colon t \mapsto \log(1 + \exp(-t))$, $\lambda > 0$ is a regularization parameter, and $x_{-1}$ denotes the vector made of entries of $x$ except the first coordinate (we do not penalize intercept). This problem can be solved using Newton's method which we implement in \texttt{jax} using backtracking line search (Wolfe condition). Gradient and Hessian are evaluated using \texttt{jax} automatic differentiation, and the matrix inversion operations are performed with an explicit call to a linear system solver. 

We denote by $x(\theta)$ the solution to problem \eqref{eq:logisticReg} and try to evaluate the gradient of $\theta \mapsto \|x(\theta)\|^2/2$ using the three algorithms presented in \Cref{sec:threeAlgorithms}. We simulate data with Gaussian class conditional distributions for different values of $N$ and $n$. The results are presented in \Cref{fig:NewtonIP} where we represent the time required by algorithms as a function of the number of parameters required to specify problem \eqref{eq:logisticReg}, in our case size of $A$ and size of $y$, which is $(n+1)N$.

\Cref{fig:NewtonIP} illustrates that both one-step and implicit differentiation enjoy a marginal computational overhead, contrary to algorithmic differentiation. In this experiment, the one-step estimator actually has a slight advantage in terms of computation time compared to implicit differentiation.

\paragraph{Interior point solver for quadratic programming:} The content of this section is motivated by elements described in \cite{amos2017optnet}, which is associated with a \texttt{pytorch} library implementing a standard interior point solver. Consider the following quadratic program (QP):
\begin{align}
    \min_{x \in \RR^n} \quad \frac{1}{2} x^T Q x + q^Tx 
  \quad \mathrm{s.t.} \quad A x = \theta,\quad Gx \leq h, \label{eq:QP}
\end{align}
where $Q \in \RR^{n \times n}$ is positive definite, $A \in \RR^{m \times n}$ and $G \in \RR^{p \times n}$ are matrices, $q \in \RR^n$, $\theta \in \RR^m$ and $h\in \RR^p$ are vectors. We consider $x(\theta)$ the solution of problem \eqref{eq:QP} as a function of $\theta$, the right-hand side of the equality constraint. We implemented in \texttt{jax} a standard primal-dual Interior Point solver for problem \eqref{eq:QP}. Following \cite{amos2017optnet}, we use the implementation described in \cite{vandenberghe2010cvxopt}, and we solve linear systems with explicit calls to a dedicated solver. For generic inputs, this algorithm converges very fast, which we observed empirically. Differentiable programming capacities of \texttt{jax} can readily be used to implement the automatic differentiation and one-step derivative estimators without requiring custom interfaces as in \cite{amos2017optnet}. Indeed, implicit differentiation for problem \eqref{eq:QP} was proposed in \cite{amos2017optnet} with an efficient \texttt{pytorch} implementation. We implemented these elements in \texttt{jax} in order to evaluate $J_\theta x(\theta)$ using implicit differentiation. 

We consider evaluating the gradient of the function $\theta \mapsto \|x(\theta)\|^2 / 2$ using the three algorithms proposed in \Cref{sec:threeAlgorithms}. We generate random instances of QP in \eqref{eq:QP} of various sizes. The number of parameters needed to describe each instance is $n(n+1) + (n+1)m + (n+1)p$.  The results are presented in \Cref{fig:NewtonIP} where we represent the time required by algorithms as a function of the number of parameters required to specify problem \eqref{eq:QP}. In all our experiments, the implicit and one-step estimates agree up to order $10^{-6}$.
From \Cref{fig:NewtonIP}, we see that both one-step and implicit differentiation enjoy a marginal additional computational overhead, contrary to algorithmic differentiation. 

\begin{figure}[t]
    \centering
    \includegraphics[width=1\textwidth]{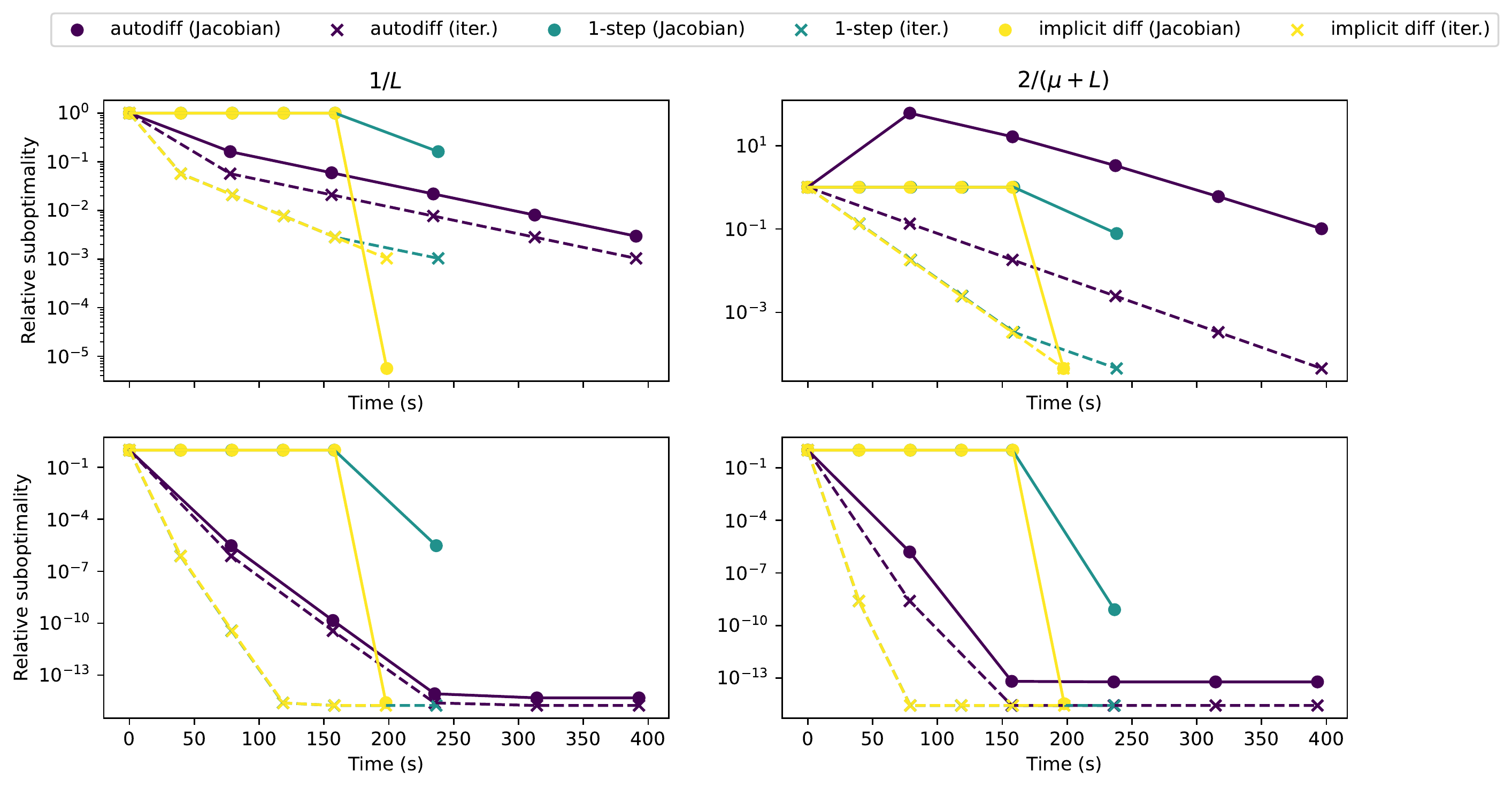}
    \caption{Differentiation of gradient descent for solving weighted Ridge regression on \texttt{cpusmall}. Top line: effective condition number of 1000. Bottom line: effective condition number of 100. Left column: small learning rate $\frac{1}{L}$. Right column: big learning rate $\frac{2}{\mu+L}$. Dotted lines represent the lack of optimality of the iterates whereas filled lines represent the lack of optimality of the Jacobians.}
    \label{fig:gd}
\end{figure}
\paragraph{Weighted ridge using gradient descent.}
We consider a weighted ridge problem with $A \in \RR^{N \times n}$, $y \in \RR^N$, $\lambda > 0$ and a vector of weights $\theta \in \RR^N$:
\[
    \bar x(\theta) = \arg\min_{x \in \RR^n} f_\theta(x) = \frac12 \sum_{i=1}^N \theta_i (y_i - \langle a_i, x \rangle)^2 + \frac{\lambda}{2} \| x \|^2 .
\]
We solve this problem using gradient descent with adequate step-size $F(x, \theta) = x - \alpha \nabla f_\theta(x)$ with $ x_0(\theta) = 0 $,
and we consider the $K$-step truncated Jacobian propagation $\tilde F = F_\theta^K$ with $K=1/\kappa$ where $\kappa$ is the effective condition number of the Hessian.
Figure~\ref{fig:gd} benchmarks the use of automatic differentiation, one-step differentiation, and implicit differentiation on the dataset \texttt{cpusmall} provided by LibSVM~\cite{chang2011libsvm}.
We monitor both the lack of optimality $\| x_k(\theta) - \bar x(\theta) \|^2$ of the iterates, and the lack of optimality $\| J_\theta x_k(\theta) - J_\theta \bar x(\theta) \|^2$ of the Jacobian matrices.
As expected, implicit differentiation is faster and more precise but requires {\em custom implicit system to be implemented} (or the use of an external library).
One-step differentiation is significantly faster than full unrolling and in most cases, provides adequate numerical precision. We reproduce the burn-in phenomenon described in \cite{scieur2022curse}.

\section{Conclusion}
We studied the one-step differentiation, also known as Jacobian-free backpropagation, of a generic iterative algorithm, and provided convergence guarantees depending on the initial rate of the algorithm. In particular, we show that one-step differentiation of a quadratically convergent algorithm, such as Newton's method, leads to a quadratic estimation of the Jacobian. A future direction of research would be to understand how to extend our findings to the nonsmooth world as in~\cite{bolte2022automatic} for linearly convergent algorithms.

\bibliographystyle{plain}
\bibliography{references}

\newpage

\appendix

\section{Proof of \Cref{sec:bilevelOptimization}}
\label{app:proofBilevel}

\begin{proof}[of \Cref{cor:bilevel}]
    We have $\nabla_\theta (g \circ \bar{x})(\theta) = J_\theta \bar{x}(\theta)^T \nabla g(\bar{x}(\theta))$, and
    \begin{align*}
        J_\theta \bar{x}(\theta)^T \nabla g(\bar{x}(\theta))-J^{\OS} x_k(\theta)^T \nabla g(x_k) =& (J_\theta \bar{x}(\theta)^T -J^{\OS} x_k(\theta)^T)  \nabla g(\bar{x}(\theta)) \\
        & -J^{\OS} x_k(\theta)^T (\nabla g(x_k) - \nabla g(\bar{x}(\theta)).
    \end{align*}
    The result follows from the triangular inequality combined with \Cref{cor:jacobianBound} and the following 
    \begin{align*}
        \|(J_\theta \bar{x}(\theta)^T - J^{\OS} x_k(\theta)^T)  \nabla g(\bar{x}(\theta))\| &\leq \|J_\theta \bar{x}(\theta) -  J^{\OS} x_k(\theta)\|_\op \|\nabla g(\bar{x}(\theta)) \| \\
        &\leq l_g\|J_\theta \bar{x}(\theta) -  J^{\OS} x_k(\theta)\|_\op ,\\
    \end{align*}
    and
    \begin{align*}
        \|J^{\OS} x_k(\theta)^T (\nabla g(x_k) - \nabla g(\bar{x}(\theta))\| & \leq \|J^{\OS} x_k(\theta)\|_\op \| \nabla g(x_k) - \nabla g(\bar{x}(\theta))\| \leq L_F l_\nabla \|\bar{x}(\theta) - x_k\|
    \end{align*}
\end{proof}

\begin{proof}[of \Cref{lem:approximateCriticalPoints}]
    We have, using the ``descent lemma" (see \cite{beck2017first}), for all $k \in 0 \ldots K$,
    \begin{align*}
        f(x_{k+1}) -  f(x_k)  &\leq \left\langle \nabla f(x_k), x_{k+1} - x_k \right\rangle + \frac{L}{2} \|x_{k+1} -x_k\|^2 \\
        & = \frac{L}{2} \left( 2\left \langle \frac{\nabla f(x_k)}{L}, x_{k+1} - x_k \right\rangle +  \|x_{k+1} -x_k\|^2 \right)\\
        & = \frac{L}{2} \left(\left\|x_{k+1} -x_k + \frac{\nabla f(x_k)}{L}\right\|^2 - \|\nabla f(x_k)\|^2 \right)\\
         & \leq \frac{L}{2} \left(\epsilon^2 -  \min_{k = 0, \ldots, K} \|\nabla f(x_k)\|^2\right).
    \end{align*}

    Summing for $k = 0,\ldots, K$, we have
    \begin{align*}
        f^* - f(x_0) \leq  f(x_{K+1}) - f(x_0) \leq \frac{KL}{2} \left(\epsilon^2 -  \min_{k = 0, \ldots, K} \|\nabla f(x_k)\|^2\right).
    \end{align*}
    and we deduce, by using concavity of the square root, that 
    \begin{align*}
        \min_{k = 0, \ldots, K} \|\nabla f(x_k)\|^2 \leq \epsilon^2 + \frac{2(f(x_0) - f^*)}{LK}.
    \end{align*}
\end{proof}

\end{document}